\newtheorem{theorem}{Theorem}[section]
\newtheorem{lemma}[theorem]{Lemma}
\newtheorem{conjecture}[theorem]{Conjecture}
\theoremstyle{definition}
\theoremstyle{remark}
\newtheorem{remark}[theorem]{Remark}
\numberwithin{equation}{section}
\renewcommand{\hat}{ \widehat}
\newcommand{\Z}{\mathbb Z}
\newcommand{\R}{\mathbb R}
\newcommand{\N}{\mathbb N}
\newcommand{\inte}{{\mathrm{int}}}
\renewcommand{\epsilon}{\varepsilon}
\begin{document}
\sloppy

\title[Virtual Betti numbers of mapping tori of 3-manifolds]{Virtual Betti numbers of mapping tori of 3-manifolds}

\author{Christoforos Neofytidis}
\address{Department of Mathematics, Ohio State University, Columbus, OH 43210, USA}
\email{neofytidis.1@osu.edu}
\date{\today}
\subjclass[2010]{57M05, 57M10, 57M50, 55M25, 57N37}
\keywords{Virtual Betti numbers, mapping tori of reducible 3-manifolds, degree one maps}

\begin{abstract}
Given a reducible $3$-manifold $M$ with an aspherical summand in its prime decomposition and a homeomorphism $f\colon M\to M$, we construct a map of degree one from a finite cover of $M\rtimes_f S^1$ to a mapping torus of a certain aspherical $3$-manifold.
We deduce that $M\rtimes_f S^1$ has virtually infinite first Betti number, except when all aspherical summands of $M$ are virtual $T^2$-bundles.
This verifies all cases of a conjecture of T.-J. Li and Y. Ni, that any mapping torus of a reducible $3$-manifold $M$ not covered by $S^2\times S^1$ 
has virtually infinite first Betti number, except when $M$ is virtually $(\#_n T^2\rtimes S^1)\#(\#_mS^2\times S^1)$. 
Li-Ni's conjecture was recently confirmed by Ni with a group theoretic result, namely, by showing that there exists a $\pi_1$-surjection from a finite cover of any mapping torus of a reducible $3$-manifold to a certain mapping torus of $\#_m S^2\times S^1$ and using the fact that free-by-cyclic groups are large when the free group is generated by more than one element.
\end{abstract}

\maketitle

\section{Introduction}

The virtual first Betti number of a manifold $M$ is defined to be
\[
vb_1(M)=\sup\{b_1(\overline{M}) \ | \ \overline{M} \text{ is a finite cover of } M\}
\]
(where $b_1$ denotes the first Betti number) and takes values in $\N_0\cup\{\infty\}$. This notion arises naturally in geometric topology and it is often difficult to compute. A recent prominent example is given by the resolution of the Virtual Haken Conjecture~\cite{Ag} which implies that $vb_1=\infty$ for hyperbolic $3$-manifolds, and therefore completes the picture for the values of $vb_1$ in dimension three.
Li and Ni~\cite{LiNi} used this picture to compute $vb_1$ for mapping tori of prime $3$-manifolds:

\begin{theorem}{\normalfont(\cite[Theorem 1.2]{LiNi}).}\label{LiNi}
Let $X=M\rtimes_f S^1$ be a mapping torus of a closed prime $3$-manifold $M$. Then $vb_1(X)$ is given as follows:
\begin{itemize}
\item[(1)] If $M$ is a spherical manifold, then $vb_1(X) = 1$;
\item[(2)] If $M$ is $S^1\times S^2$ or finitely covered by $T^2\rtimes S^1$, then $vb_1(X)\leq 4$; 
\item[(3)] In all other cases, $vb_1(X)=\infty$.
\end{itemize}
\end{theorem}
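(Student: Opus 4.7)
The plan is to split by Thurston's geometrization of $M$.

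\textbf{Cases (1) and (2).} When $M$ is spherical, $\pi_1(M)$ is finite and every finite cover $Y$ of $X$ fits in an extension $1\to F\to \pi_1(Y)\to \Z\to 1$ with $F$ finite, so $b_1(Y)=1$. When $M=S^1\times S^2$, $\pi_1(X)$ is virtually $\Z^2$; when $M$ is virtually $T^2\rtimes S^1$, $\pi_1(X)$ is virtually polycyclic of Hirsch length at most $4$. Since $b_1$ of a polycyclic group is bounded by its Hirsch length (a virtual invariant), $vb_1(X)\le 4$ in case (2).

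\textbf{Case (3).} Here $M$ is aspherical with non-polycyclic $\pi_1$. The Wang sequence of $X=M\rtimes_f S^1$ gives
\[
b_1(X)=\dim_\QQ H_1(M;\QQ)^{f_*}+1,
\]
so the goal is to produce, for every $B$, a finite cover $\widetilde M\to M$ preserved by some power $f^n$ of $f$ with $\dim_\QQ H_1(\widetilde M;\QQ)^{f^n_*}>B$; then $\widetilde M\rtimes_{f^n}S^1$ is a finite cover of $X$ with $b_1>B$. I would treat the geometries separately. If $M$ is hyperbolic, Mostow rigidity makes $\mathrm{Out}(\pi_1 M)$ finite, so some $f^n$ is isotopic to $\mathrm{id}_M$ and the corresponding cover of $X$ is diffeomorphic to $M\times S^1$; Agol's theorem then gives $vb_1(X)\ge vb_1(M)+1=\infty$. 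If $M$ is Seifert-fibered with hyperbolic base orbifold $\O$, a power of $f$ preserves the fibration and descends to a self-map $\overline f\co \O\to \O$; one chooses a characteristic finite cover of $\O$ with arbitrarily large $b_1$ on which a further power of $\overline f$ acts trivially, and pulls it back via the Seifert projection. If $M$ has a nontrivial JSJ decomposition, $f$ permutes the JSJ pieces and a power fixes each, reducing to the previous arguments applied piece by piece.

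The main obstacle is the gluing step in the JSJ case: the covers chosen on the individual pieces must be compatible along the JSJ tori and invariant under a common power of $f$. My plan is to use characteristic covers on each piece (so that any piece-preserving self-homeomorphism automatically lifts), and to replace each such cover by a further one whose boundary tori realize a common finite-index sublattice in $H_1$, so that the pieces glue coherently. The global cover $\widetilde M$ is then assembled from these compatible pieces, and a suitable power of $f$ descends to it, completing the production of finite covers of $X$ with arbitrarily large first Betti number.
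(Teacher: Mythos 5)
This theorem is quoted from Li--Ni \cite{LiNi} and the paper gives no proof of it, so there is no in-paper argument to compare against; I will instead assess your sketch on its own terms.

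Your reduction via the Wang sequence, $b_1(X)=\dim_\QQ H_1(M;\QQ)^{f_*}+1$, is correct and is the right starting point, and cases (1), (2) and the hyperbolic sub-case of (3) (Mostow rigidity forces $f^n\simeq \mathrm{id}$, hence $M\rtimes_{f^n}S^1\cong M\times S^1$, then apply Agol) are sound. The gap is in the Seifert-fibered sub-case: you ask for a characteristic finite cover of the hyperbolic base orbifold $\O$ \emph{on which a further power of $\overline f$ acts trivially}. This does not exist in general. If $\overline f$ is, say, pseudo-Anosov, or more generally has infinite order in the mapping class group, no power of it acts trivially on the homology of any finite cover of $\O$; a Dehn twist about a non-separating curve already acts as an infinite-order transvection on $H_1$ and lifts to multi-twists on covers. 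The point is not that $\overline f^n$ becomes trivial, but that the \emph{fixed subspace} of $H_1$ of suitable $\overline f^n$-invariant covers can be made large. The standard way to see this is to note that (after passing to a cover where the orbifold becomes a hyperbolic surface $\Sigma$) the mapping torus $\Sigma\rtimes_{\overline f}S^1$ is a closed $3$-manifold with $vb_1=\infty$ --- being hyperbolic, Seifert, or a nontrivial graph manifold depending on the Nielsen--Thurston type of $\overline f$ --- and then to use the induced circle bundle $\widetilde M\rtimes_{f^k}S^1\to\Sigma\rtimes_{\overline f}S^1$ together with the Gysin sequence to import the large $b_1$. Without some replacement of this kind the Seifert case is not proved. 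The JSJ gluing step you flag yourself as the main obstacle is likewise not carried out; one can again sidestep the piece-by-piece construction by observing that manifolds with nontrivial JSJ are large (hence admit characteristic finite-index subgroups surjecting onto $F_2$) and then arguing about invariant subspaces directly, but as written the proposal leaves this essential step open.
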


When the fiber $M$ is reducible, then the monodromy $f$ of the mapping torus $M\rtimes_f S^1$ is in general more complicated than when $M$ is irreducible; see~\cite{Mc,Z,NW}. Li and Ni conjectured that almost always $vb_1(M\rtimes_f S^1)=\infty$ when $M$ is reducible:

\begin{conjecture}{\normalfont(\cite[Conjecture 5.1]{LiNi}).}\label{conjBetti}
If $M$ is a closed oriented reducible $3$-manifold, then $vb_1(M\rtimes_f S^1)=\infty$, unless $M$ is finitely covered by $S^2\times S^1$.
\end{conjecture}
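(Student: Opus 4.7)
The plan is to reduce Conjecture~\ref{conjBetti} to the prime case covered by Theorem~\ref{LiNi} by propagating infinite virtual first Betti number from a well-chosen prime summand of $M$ through a degree-one construction on mapping tori. Write the Kneser--Milnor decomposition
\[
M \;\cong\; P_1\#\cdots\#P_k\#\bigl(\#_m\, S^2\times S^1\bigr),
\]
with each $P_i$ prime and irreducible; the hypothesis that $M$ is not finitely covered by $S^2\times S^1$ forces $k\geq 1$ or $m\geq 2$. First I would pass to a finite cover of $M\rtimes_f S^1$ and replace $f$ by a suitable power so that the monodromy preserves the prime summands setwise and restricts to a homeomorphism on each. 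This relies on standard structural results, due to McCullough and Laudenbach, stating that the mapping class group of a reducible 3-manifold has a finite-index subgroup fixing each factor in the prime decomposition up to isotopy.

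If some $P_i$ is aspherical and not a virtual $T^2$-bundle, call it $P$, and construct a degree-one collapse map $\phi\colon M\to P$ by crushing the punctured complement of $P$ to a point. After choosing representatives so that $f$ restricts to a homeomorphism $g$ on $P$ and $\phi$ strictly intertwines $f$ with $g$, the map $\phi$ descends to a degree-one map of mapping tori
\[
\Phi\colon M\rtimes_f S^1 \longrightarrow P\rtimes_g S^1.
\]
Since $P$ is prime, aspherical, and not a virtual $T^2$-bundle, Theorem~\ref{LiNi}(3) gives $vb_1(P\rtimes_g S^1)=\infty$. Degree-one maps pull back under finite covers via fiber product and induce surjections on rational $H_1$, so I conclude $vb_1(M\rtimes_f S^1)=\infty$ in this case. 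For the remaining situation, where every aspherical summand is a virtual $T^2$-bundle or there are no aspherical summands at all, Theorem~\ref{LiNi}(2) yields only the bound $vb_1\leq 4$ for the candidate target, so a different idea is required; here I would follow a group-theoretic route, constructing an $f$-equivariant surjection from a finite-index subgroup of $\pi_1(M)$ onto a free group $F_r$ of rank $r\geq 2$ (which exists precisely because $M$ is not virtually $S^2\times S^1$), so that a finite-index subgroup of $\pi_1(M\rtimes_f S^1)$ surjects onto the free-by-cyclic group $F_r\rtimes\Z$, which is large; largeness forces $vb_1=\infty$.

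The main obstacle I anticipate is the rigidification step: making $\phi$ \emph{strictly} $f$-equivariant rather than only equivariant up to homotopy, since the mapping torus construction is sensitive to the actual monodromy. Concretely, after arranging that $f$ permutes summands trivially, one must isotope $f$ within its mapping class and simultaneously adjust the connect-sum spheres so that the restriction $g$ of $f$ to $P$ is a genuine factor of $f$ compatible with $\phi$. A secondary obstacle is producing the $f$-equivariant free quotient of rank at least two in the remaining cases, which requires a careful interaction between the Grushko decomposition of $\pi_1(M)$ and the induced action of $f_*$; this is the crux of Ni's alternative group-theoretic proof alluded to in the abstract.
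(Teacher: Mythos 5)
Your overall architecture is right — a degree-one map from (a finite cover of) $M\rtimes_f S^1$ onto a mapping torus of an aspherical prime summand, followed by Theorem~\ref{LiNi}(3), plus largeness of free-by-cyclic groups for the leftover cases — but the central technical step as you describe it would fail, and the way you propose to patch it is not available.

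First, it is not true that you can pass to a power of $f$ (or to a finite cover) so that the monodromy ``restricts to a homeomorphism on each'' prime summand. McCullough's theorem (Theorem~\ref{Mc1} in the paper) decomposes $f$ into homeomorphisms preserving summands, interchanges, and \emph{slide homeomorphisms}, and the slides cannot be eliminated by passing to a finite-index subgroup of the mapping class group: they generate an infinite, non-central part of $\mathrm{MCG}(M)$ that survives in any finite-index subgroup. So the hypothesis on which your collapse map $\phi$ is supposed to strictly intertwine $f$ with a homeomorphism $g$ of $P$ is simply unavailable.

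Second, you correctly flag the rigidification problem (making $\phi$ strictly $f$-equivariant) as the main obstacle, but you offer no mechanism to overcome it, and indeed it cannot be overcome at the level of honest maps of manifolds. The paper's key idea is precisely to abandon rigidification: Lemma~\ref{l:commutativity} proves only that for each type of building block $g_i$ of $f$ there is a self-homeomorphism $h_i$ of $M_1$ with $(p\circ g_i)_* = (h_i\circ p)_*$ on fundamental groups, i.e.\ commutativity \emph{in homotopy}. This is then converted into an actual degree-one map not by isotoping anything, but by noting that $M_1\rtimes_h S^1$ and $B\pi_1(M)\rtimes_{B(f_*)} S^1$ are aspherical, so the induced homomorphism $\overline{p}_*$ on $\pi_1$ of the mapping tori is realized by a genuine map of classifying spaces $B(\overline{p}_*)$, and precomposing with the classifying map $\psi_{M\rtimes_f S^1}$ yields the degree-one map $F$. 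Without this detour through classifying spaces and the asphericity of $M_1$ your construction does not go through; that is exactly why, as the paper notes, this method cannot handle the case with no aspherical summand. Your group-theoretic fallback for the $(\#_n T^2\rtimes S^1)\#(\#_m S^2\times S^1)$ case is in the right direction, but it too should be phrased as a $\pi_1$-level intertwining (as in Remark~\ref{Nicommute}), not as an ``$f$-equivariant surjection'' of spaces.
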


As pointed out in~\cite[Lemma 5.4]{LiNi} (see Lemma \ref{l:asphericalsum}), if $M$ is a closed reducible $3$-manifold which is not covered by $S^2\times S^1$,
then $M$ is finitely covered by a connected sum $M'\#(S^2\times S^1)\#(S^2\times S^1)$, for some closed $3$-manifold $M'$. Since free-by-cyclic groups are large whenever the free group is generated by more than one element (cf.~\cite{Wi,HW,But}), we deduce that 
$$vb_1((\#_m S^2\times S^1)\rtimes_f S^1)=\infty \ \text{for} \ m\geq 2.$$ Thus, when $M$ contains an aspherical summand in its prime decomposition, and not only summands covered by mapping tori of $T^2$, Conjecture \ref{conjBetti} follows by the following result:

\begin{theorem}\label{t:main}
Let $M$ be a closed oriented reducible 3-manifold that contains at least one aspherical summand in its prime decomposition. For any mapping torus $M\rtimes_f S^1$, there is a finite cover $\overline{M}$ of $M$ containing an aspherical summand $M_1$ in its prime decomposition and a degree one map
\[
\overline{M}\rtimes_{f^k} S^1 \longrightarrow M_1\rtimes_h S^1,
\]
for some $k\geq 1$ and some homeomorphism $h\colon M_1\longrightarrow M_1$.
\end{theorem}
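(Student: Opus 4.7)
The plan is to arrange, after passing to a power of $f$ and, if needed, a finite cover of the base, that the monodromy preserves each prime summand of $M$ individually, and then to pull back the classical degree-one pinch map onto a chosen aspherical summand to obtain a degree-one map of mapping tori.

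Fix a system of essential $2$-spheres $S = S_1 \cup \cdots \cup S_{n-1}$ realising the prime decomposition $M = M_1 \# \cdots \# M_n$, with $M_1$ aspherical. Because the prime decomposition is unique up to permutation of the factors, $f$ permutes the isotopy classes of the $S_i$ and of the complementary pieces through a finite quotient. Consequently some power $f^k$ fixes the isotopy class of each $S_i$ and of each summand. Invoking the structural results on homeomorphisms of reducible $3$-manifolds recalled in \cite{Mc,Z,NW}, I may isotope $f^k$ in its mapping class so that it literally setwise preserves $S$ and each complementary piece. Capping the punctured pieces with $3$-balls (possible because every self-homeomorphism of $S^2$ extends across $B^3$) produces a self-homeomorphism of every summand; in particular, define $h := f^k|_{M_1}$.

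If this $f^k$ does not yet give the situation claimed by the theorem, pass to a finite cover $\overline{M}$ of $M$ (for instance a characteristic cover) to which a further power of $f$ lifts, and repeat the above summand-preserving isotopy inside $\overline{M}$. The hypothesis that $M$ has an aspherical summand, together with Kneser-Milnor and the Bass-Serre tree for the free product, guarantees that $\overline{M}$ still contains at least one aspherical summand in its prime decomposition, which I again call $M_1$.

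Now consider the classical pinch map $p \colon \overline{M} \to M_1$ obtained by collapsing every summand other than $M_1$ to a point. It has degree one, and by the strict equivariance arranged above it satisfies $p \circ f^k = h \circ p$. Passing to mapping tori, this yields a well-defined map
\[
\overline{M} \rtimes_{f^k} S^1 \longrightarrow M_1 \rtimes_h S^1,
\]
whose degree equals $\deg(p) = 1$, as required.

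The principal obstacle is the first step: upgrading the purely isotopy-theoretic statement (that a power of $f$ preserves the prime decomposition up to isotopy) to the literal point-set statement that $f^k$ preserves a specific sphere system and each summand, so that the pinch map is \emph{strictly} equivariant rather than equivariant only up to homotopy. This is exactly what the structural results on mapping class groups of reducible $3$-manifolds provide; once strict equivariance is arranged, the degree-one map of mapping tori is immediate.
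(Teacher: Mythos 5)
The argument breaks at precisely the step you flag as the ``principal obstacle,'' and the structural results you invoke do not repair it. The claim that ``$f$ permutes the isotopy classes of the $S_i$'' is false: while the Kneser--Milnor decomposition is unique up to homeomorphism, a maximal sphere system in a reducible $3$-manifold is very far from unique up to isotopy, and a homeomorphism can (and typically does) carry $S_i$ to a sphere that still separates off $M_i$ but is not isotopic to $S_i$. The culprit is exactly the third class in McCullough's generating set, the slide homeomorphisms. A slide of $M_i$ around an arc $\alpha$ passing through other summands is a Dehn twist about the compressible torus bounding a regular neighbourhood of $M_i' \cup \alpha$; this torus meets the spheres $S_j$ that $\alpha$ crosses, so the twist moves those spheres off themselves non-isotopically. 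Slides generate an infinite normal subgroup of the mapping class group, and if $f$ has a non-trivial slide component (around a loop of infinite order in some $\pi_1(M_j)$), then no power $f^k$ is isotopic to a homeomorphism preserving any fixed sphere system. Consequently you cannot arrange strict equivariance $p \circ f^k = h \circ p$, and the proposed map of mapping tori is not defined.

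This is why the paper does something genuinely weaker and then compensates for it. Lemma~\ref{l:commutativity} only establishes the commutativity $p_* \circ (g_i)_* = (h_i)_* \circ p_*$ at the level of $\pi_1$, and the hardest case there is precisely the slide case, where one must first homotope loops off the twisting tori $T$, $T'$ before the identity holds. With only homotopy-commutativity in hand, one does not get a strict map of mapping tori; instead the paper passes to classifying spaces, observes that both $B\pi_1(M)\rtimes_{B(f_*)}S^1$ and $M_1\rtimes_h S^1$ are aspherical (the latter because $M_1$ is aspherical, which is where the hypothesis is essential), and then extracts a degree-one map via the Wang sequences in homology. That last homological step is also where the ``degree one'' is actually proved; in your proposal the degree claim rests on the strict equivariance that is not available. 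To salvage your approach you would need to replace strict equivariance with a homotopy-equivariance argument, and at that point you would be reproducing Sections~\ref{s:commutativity}--\ref{proof} of the paper.
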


Recently, Ni~\cite{Ni} verified Conjecture \ref{conjBetti} by showing that there exists a surjection from the fundamental group of a mapping torus of a finite cover of $M$ to the fundamental group of a mapping torus of a connected sum $\#_m S^2\times S^1$, $m\geq 2$, and making use of the fact that free-by-cyclic groups with at least three generators are large. Our result is in a sense both stronger and weaker than Ni's result. It is stronger, on the one hand, because it comes with a construction of a map of non-zero degree, instead of just a $\pi_1$-surjection as in~\cite{Ni}. Indeed, it is likely that there is even a degree one map from a mapping torus of a finite covering of $M$ to a mapping torus of a connected sum $\#_m S^2\times S^1$; see~\cite[p. 1592]{Ni}. However, the map we construct here makes essential use of the asphericity of the summand $M_1$ and therefore our method cannot be extended to the case where no aspherical summand exists in the prime decomposition of $M$.  On the other hand, the $\pi_1$-surjection obtained by Ni covers as well the case where the aspherical summands of $M$ are only virtual mapping tori of $T^2$. Therefore, it is natural to ask whether one can find a topological proof of Conjecture \ref{conjBetti} for connected sums of type $(\#_n T^2\rtimes S^1)\#(\#_m S^2\times S^1)$. Also, it would be interesting to find a purely group theoretic proof that for every reducible $3$-manifold $M$ that is not finitely covered by $S^2\times S^1$, any $\pi_1(M)$-by-cyclic group is large.

\subsection*{Outline} In Section \ref{s:preliminaries} we give some facts about finite coverings of mapping tori and in Section \ref{description} we recall the description of self-homeomorphisms of closed reducible $3$-manifolds. The main body of the proof of Theorem \ref{t:main} is given in Sections \ref{s:commutativity} and \ref{proof}. Finally, we discuss Conjecture \ref{conjBetti} in Section \ref{virtual}. 

\subsection*{Acknowledgements}
The author is supported by the Swiss NSF, under grant FNS200021$\_$169685. Part of this work was done during author's visit at IH\'ES in 2018. The author would like to thank IH\'ES for providing a stimulating working environment, and especially Misha Gromov and Fanny Kassel for their hospitality.

\section{Preliminaries}\label{s:preliminaries}

We begin our discussion by gathering some well-known facts needed for our proofs.

\medskip

Let $M$ be a closed oriented reducible 3-manifold. By the Kneser-Milnor theorem~\cite{He}, $M$ can be decomposed as a connected sum
\[
M=M_1\#\cdots\# M_n \#(\#_m S^2\times S^1)\#(\#_{p=1}^s S^3/Q_p), 
\]
where each $M_i$ is aspherical and $S^3/Q_p$ are spherical quotients with fundamental groups the finite groups $Q_p$.

The following lemmas give some precise descriptions of finite covers of $M$ and can be found in~\cite[pp. 23-24]{KN} and~\cite[Lemma 5.4]{LiNi} respectively:

\begin{lemma}\label{l:noaspherical}
If $n=0$ and $M\neq\R P^3\#\R P^3$ or $S^2\times S^1$, then $M$ is finitely covered by $\#_{m'} S^2\times S^1$, for some $m'\geq2$. 
\end{lemma}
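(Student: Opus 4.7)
The plan is to produce a torsion-free finite-index subgroup $K \leq \pi_1(M)$ that is free of rank at least two; the associated regular cover $\overline{M} \to M$ will then be a closed orientable $3$-manifold with $\pi_1(\overline{M}) \cong F_{m'}$ for some $m' \geq 2$, and by the Kneser--Milnor theorem together with the Poincar\'e conjecture, $\overline{M} \cong \#_{m'} S^2 \times S^1$. Since $n=0$, the Kneser--Milnor decomposition of $M$ gives
\[
\pi_1(M) \cong F_m \ast Q_1 \ast \cdots \ast Q_s,
\]
where $F_m$ is the free group of rank $m$ and each $Q_p$ is a nontrivial finite group.

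To construct $K$, I would embed each $Q_p$ into a common finite group $G$ (for concreteness, each $Q_p$ embeds into a sufficiently large symmetric group via the regular representation), combine these embeddings with any homomorphism $F_m \to G$ using the universal property of the free product, and set $K := \ker \phi$, where $\phi \colon \pi_1(M) \to G$ is the resulting map (after replacing $G$ with $\phi(\pi_1(M))$ we may assume $\phi$ is surjective). By construction, $K$ has finite index in $\pi_1(M)$ and meets every conjugate of every $Q_p$ trivially. The Kurosh subgroup theorem then guarantees that $K$ is free of some rank $m'$, so that $\overline{M}$ has the form described above.

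It remains to ensure $m' \geq 2$. Using $\chi(A \ast B) = \chi(A) + \chi(B) - 1$ for the rational Euler characteristic, one computes
\[
\chi(\pi_1(M)) = (1-m) - s + \sum_{p=1}^{s} \frac{1}{|Q_p|}, \qquad m' - 1 = -[\pi_1(M) : K] \cdot \chi(\pi_1(M)).
\]
Since $|Q_p| \geq 2$ and $M$ being reducible forces $m + s \geq 2$, a short case analysis on $(m, s)$ shows that $\chi(\pi_1(M)) = 0$ occurs exactly for $(m, s) = (1, 0)$ (that is, $M = S^2 \times S^1$) and for $(m, s) = (0, 2)$ with $|Q_1| = |Q_2| = 2$ (that is, $M = \R P^3 \# \R P^3$). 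Under the hypotheses of the lemma we therefore have $\chi(\pi_1(M)) < 0$, and since $m' - 1$ is a nonnegative integer equal to $[\pi_1(M) : K]$ times a positive rational, we automatically obtain $m' \geq 2$. The main obstacle I expect is the Euler-characteristic case analysis pinning down the two excluded manifolds; the other ingredients --- the Kurosh subgroup theorem, the Euler characteristic formula for free products, and the Kneser--Milnor/Poincar\'e identification of the cover --- are standard.
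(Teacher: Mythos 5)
Your proof is correct and follows essentially the same route as the paper: pass to the kernel of a homomorphism from $\pi_1(M)=F_m\ast Q_1\ast\cdots\ast Q_s$ onto a finite group that is injective on each $Q_p$, invoke the Kurosh subgroup theorem to see the kernel is free, and identify the resulting finite cover with $\#_{m'}S^2\times S^1$. The paper simply takes the projection onto $Q_1\times\cdots\times Q_s$, invokes Kurosh plus Grushko, and asserts $m'\geq2$ without further comment, whereas you supply the Euler-characteristic computation that pins down exactly why the excluded cases $S^2\times S^1$ and $\R P^3\#\R P^3$ are the only obstructions -- a detail the paper leaves implicit.
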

\begin{proof}
Let the projection 
\[
\varphi\colon\pi_1(M)\longrightarrow Q_1\times\cdots\times Q_s
\]
of the free product $\pi_1(M)=F_m\ast Q_1\ast\cdots\ast Q_s$ to the direct product $\prod_{p=1}^s Q_p$. By the Kurosh subgroup theorem, the kernel of $\varphi$ is a free group, say $F_{m'}$, where $m'\geq 2$. Since moreover $\ker(\varphi)$ has finite index in $\pi_1(M)$, Grushko's theorem implies that $M$ is finitely covered by the connected sum $\#_{m'} S^2\times S^1$. 
\end{proof}

\begin{lemma}\label{l:asphericalsum}
If $n\geq 1$, then $M$ is finitely covered by $M'\#(\#_{m'}S^2\times S^1)$, where $M'$ is a connected sum of aspherical 3-manifolds and $m'\geq2$.
\end{lemma}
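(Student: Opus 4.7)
The plan is to realize the required cover as the connected regular cover $\overline{M}\to M$ associated to the kernel of an explicit surjection of $\pi_1(M)$ onto a finite group, and then to read off its Kneser-Milnor decomposition by counting preimages of the prime pieces and connecting spheres of $M$. Write
\[
\pi_1(M)=\pi_1(M_1)*\cdots*\pi_1(M_n)*F_m*Q_1*\cdots*Q_s.
\]
The essential input is that every closed aspherical $3$-manifold group is infinite and residually finite (by geometrization), hence admits finite quotients of arbitrarily large order; in particular, there is a surjection $\kappa_1\co\pi_1(M_1)\twoheadrightarrow K_1$ with $|K_1|\ge 3$.

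Next, I would define $\psi\co\pi_1(M)\to K_1\times Q_1\times\cdots\times Q_s$ to equal $\kappa_1$ on $\pi_1(M_1)$, the canonical inclusion on each $Q_p$, and trivial on every other factor of the free product, and let $\overline{M}\to M$ be the cover corresponding to $\ker\psi$, of degree $|K_1|d$ with $d=\prod_p|Q_p|$. Tracing preimages in the dual graph of the connected-sum decomposition of $M$: there are $d$ lifts of $M_1$, each a degree-$|K_1|$ aspherical cover $\overline{M}_1$; $|K_1|d$ copies of each $M_j$ for $j\ge 2$; $|K_1|d$ copies of each $S^2\times S^1$ factor; $|K_1|d/|Q_p|$ copies of $S^3$ above each $S^3/Q_p$ (which disappear from the prime decomposition); and $|K_1|d$ preimages of each connecting sphere. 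By Kneser-Milnor, $\overline{M}$ is the connected sum of the non-spherical preimage pieces together with one additional $S^2\times S^1$ summand per independent cycle of the dual graph. Combined with $\chi(\ker\psi)=|K_1|d\cdot\chi(\pi_1(M))$ and $\chi(\pi_1(M))=1-n-m-s+\sum_p 1/|Q_p|$, this yields
\[
m'=1-d+|K_1|dm+|K_1|\sum_{p=1}^{s}\frac{d(|Q_p|-1)}{|Q_p|}
\]
for the number of $S^2\times S^1$ summands of $\overline{M}$.

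It then remains to verify $m'\ge 2$. If $s\ge 1$ then $d\ge 2$ and each $(|Q_p|-1)/|Q_p|\ge 1/2$, so $m'\ge 1+d(|K_1|/2-1)\ge 2$ whenever $|K_1|\ge 3$. If $s=0$ and $m\ge 1$, then $d=1$ and $m'=|K_1|m\ge 2$. The remaining case $s=m=0$ forces $n\ge 2$ by reducibility of $M$; here I would modify $\psi$ by targeting $K_1\times K_2$, using a second surjection $\kappa_2\co\pi_1(M_2)\twoheadrightarrow K_2$ with $|K_2|\ge 2$ on $\pi_1(M_2)$ and trivial elsewhere, and an analogous bookkeeping yields $m'=(|K_1|-1)(|K_2|-1)\ge 2$ once $|K_1|\ge 2$ and $|K_2|\ge 3$.

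The main obstacle I anticipate is the preimage bookkeeping itself: computing the number of components and the degree of the cover over each prime piece (which depends on the image of each free-product factor under $\psi$), confirming connectedness of $\overline{M}$ (equivalently surjectivity of $\psi$), and correctly accounting for the cycle contribution of the dual graph versus the preimage $S^2\times S^1$ contribution. Once these are in hand, the Euler characteristic identity and the case analysis for $m'\ge 2$ are routine.
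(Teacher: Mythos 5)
Your argument is correct; the formula for $m'$ and the case analysis both check out. The paper's own proof proceeds differently: it writes $M=M_1\#M_2$ with $M_1$ aspherical and $M_2\neq S^3$, first passes to a $d$-fold cover $\overline{M_1}\to M_1$ with $d\geq 3$ so that $M$ is covered by $\overline{M}=\overline{M_1}\#(\#_dM_2)$, and then takes a single surjection $\psi\colon\pi_1(\overline{M})\to G$ onto a nontrivial finite quotient of $\pi_1(M_2)$, trivial on $\pi_1(\overline{M_1})$ and surjective on each copy of $\pi_1(M_2)$; the lifted dual graph of the resulting cover then has $(d-1)(|G|-1)\geq d-1\geq 2$ independent cycles, each contributing an $S^2\times S^1$ summand, and no Euler-characteristic computation or case split on $s$ and $m$ is needed. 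Your one-shot surjection onto $K_1\times\prod_pQ_p$ (with the $K_1\times K_2$ variant when $s=m=0$) reaches the same conclusion; what it buys is a fully explicit formula for $m'$, and, by mapping each $Q_p$ injectively, it cleanly unrolls every spherical quotient to $S^3$ so that the non-$S^2\times S^1$ summands of the cover are genuinely aspherical --- a point the paper's proof, as literally written, leaves implicit when $M_2$ contains spherical pieces. The trade-off is the three-case analysis and the heavier graph/Euler-characteristic bookkeeping. Both proofs rest on the same essential inputs: residual finiteness of aspherical $3$-manifold groups, and the identification of independent cycles in the preimage of the connected-sum dual graph with $S^2\times S^1$ summands.
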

\begin{proof}
Let $M=M_1\#M_2$, where $M_1$ is aspherical and $M_2\neq S^3$ (not necessarily prime).
Since $\pi_1(M_1)$ is residually finite, there is a $d$-fold cover $\overline{M_1}$ of $M_1$ for some $d\geq3$, and so $M$ is $d$-fold covered by $\overline{M}=\overline{M_1}\#(\#_d M_2)$. Now, since  $\pi_1(M_2)$ is residually finite, there is a finite group $G$ together with a surjection
\[
\psi\colon\pi_1(\overline{M})\longrightarrow G,
\]
which maps $\pi_1(\overline{M_1})$ to the trivial element and each $\pi_1(M_2)$ surjectively to $G$. Then, since $G$ is finite, it is easy to see that the cover of $\overline{M}$ corresponding to $\ker(\psi)$ contains at least $m':=d-1\geq 2$ connected summands $S^2\times S^1$.
\end{proof}

Finally, we quote two general facts about coverings of mapping tori whose proof is easy and left to the reader (see also~\cite[Section 2]{LiNi}).

\begin{lemma}\label{l:cover}
Let $f\colon M\longrightarrow M$ be a self-homeomorphism of a closed oriented manifold (of any dimension).
\begin{itemize}
\item[(a)] $M\rtimes_f S^1$ is finitely covered by $M\rtimes_{f^k} S^1$ for every $k\geq 1$.
\item[(b)] If $\overline{M}$ is a finite cover of $M$, then $M\rtimes_f S^1$ is finitely covered by $\overline{M}\rtimes_{f^k} S^1$ for some $k$.
\end{itemize}
\end{lemma}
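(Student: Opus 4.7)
The plan is to verify both parts by analyzing the natural presentations of the relevant mapping tori and, in the second part, by combining this with a standard lifting argument for finite-index subgroups.

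For part (a), I would think of the mapping torus as the quotient $M\rtimes_f S^1=(M\times\R)/\sim$, where $(x,t)\sim(f(x),t+1)$, with the $S^1=\R/\Z$-direction parametrized by $t$. The analogous description of $M\rtimes_{f^k}S^1$ uses the identification $(x,t)\sim(f^k(x),t+1)$. There is then a well-defined quotient map $M\rtimes_{f^k}S^1\longrightarrow M\rtimes_f S^1$ induced by $(x,t)\mapsto(x,t)$ after passing from the larger equivalence relation generated by $f^k$ to the finer one generated by $f$; on fibers this is a $k$-to-$1$ map because the $k$ points $\{(x,t),(f(x),t),\ldots,(f^{k-1}(x),t)\}$ in $M\rtimes_{f^k}S^1$ are collapsed to a single point. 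Equivalently, on the level of fundamental groups, $\pi_1(M\rtimes_f S^1)=\pi_1(M)\rtimes_{f_*}\Z$, and the index-$k$ subgroup $\pi_1(M)\rtimes_{f_*^k}(k\Z)$ is naturally isomorphic to $\pi_1(M\rtimes_{f^k}S^1)$; the corresponding cover realizes the desired $k$-fold covering.

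For part (b), let $H=\pi_1(\overline{M})$, viewed as a finite-index subgroup of $\pi_1(M)$. I would use the fact that $\pi_1(M)$ is finitely generated, hence has only finitely many subgroups of any given finite index. The automorphism $f_*$ permutes this finite set, so for some $k\geq 1$ we have $f_*^k(H)=H$ (after possibly replacing $H$ by a conjugate, which does not change the homeomorphism type of the cover). Consequently $f^k$ lifts to a self-homeomorphism $\overline{f^k}\co\overline{M}\longrightarrow\overline{M}$, and the mapping torus $\overline{M}\rtimes_{\overline{f^k}}S^1$ is a finite cover of $M\rtimes_{f^k}S^1$ (the covering map is induced by the $(\overline{M}\to M)\times\mathrm{id}_{\R}$ map on the quotient descriptions). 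Composing with the covering from part (a) produces the desired finite cover $\overline{M}\rtimes_{\overline{f^k}}S^1\longrightarrow M\rtimes_f S^1$.

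The only real point requiring care is the existence of the power $k$ in (b): one must observe that $f_*$ acts on the (finite) set of index-$[\pi_1(M):H]$ subgroups of $\pi_1(M)$, or alternatively on the finite set of their conjugacy classes, and take $k$ to be a period of $H$ (or of its conjugacy class) under this action. Everything else is a direct unwinding of the mapping torus definition.
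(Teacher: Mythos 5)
The paper gives no proof of this lemma; it is stated as a standard fact whose proof is ``easy and left to the reader'' (with a pointer to Section~2 of Li--Ni). Your argument is correct and is the expected one: for (a), pass to the index-$k$ subgroup $\pi_1(M)\rtimes_{f_*^k}(k\Z)$ of $\pi_1(M)\rtimes_{f_*}\Z$; for (b), use finite generation of $\pi_1(M)$ to get finitely many subgroups of the given index, so some power $f_*^k$ preserves $H=\pi_1(\overline M)$ (up to conjugacy), allowing $f^k$ to lift, and then compose with the covering from (a).

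One small inaccuracy in your geometric description of (a): using the model $(M\times\R)/\sim$, the $k$ preimages in $M\rtimes_{f^k}S^1$ of a point $[(x,t)]\in M\rtimes_f S^1$ are $[(f^j(x),\,t+j)]$ for $j=0,\dots,k-1$, not $[(f^j(x),\,t)]$ — the $\R$-coordinate shifts along with the application of $f$. Equivalently, the covering map wraps the base circle $k$ times, so the preimages sit over $k$ distinct base points. This does not affect the $\pi_1$ argument, which is the one actually doing the work.
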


\section{Self-homeomorphisms of reducible 3-manifolds}\label{description}

In this section, we recall the isotopy types of orientation-preserving homeomorphisms of $3$-manifolds.  For the most part we follow the description given in McCullough's survey paper~\cite{Mc}, however we adapt some parts of his description in order to simplify our next arguments.

\medskip

Suppose $M$ is a closed oriented reducible 3-manifold. By the discussion in Section \ref{s:preliminaries}, we may assume that $M$ does not contain any spherical quotients in its prime decomposition. Consider the following construction of $M$: Let $W$ be a punctured $3$-cell obtained by removing $n + m$ open 3-balls from a 3-sphere, and let 
\[
S_1, S_2, ... , S_n, S_{n+1}, S_{n+2},...,S_{n+m} 
\]
be its boundary components. For each of the $S_i$, $i=1,...,n$, 
remove the interior of a $3$-ball $D^3_i$ from $M_i$, and attach $M_i'= M_i - \inte(D^3_i)$ to  $S_i$ along $\partial D^3_i$. Similarly, for each of the $S_{j}$, $j=n+1,...,n+m$, remove the interior of a $3$-ball $D^3_j=D^2_j\times I_j$ from $S^2\times S^1$, and attach $(S^2\times S^1)'_{j}= (S^2\times S^1) - \inte(D^2_j\times I_j)$ to  $S_{j}$ along $\partial D^3_{j}$. 

Using the above construction, we now describe three types of homeomorphisms of $M$. 
We remark that two orientation-preserving homeomorphisms of $W$ are isotopic if and only if they induce the same permutation on the boundary components of  $W$. 

\medskip

{\em 1. Homeomorphisms  preserving  summands.} These are the homeomorphisms of $M$ which restrict to the identity on $W$. Note that this class of homeomorphisms includes the so-called  ``spins" of $S^2\times S^1$ as given following McCullough's construction of $M$; compare~\cite[Remark, p. 69]{Mc}.

\medskip

{\em 2. Interchanges of homeomorphic summands.} If $M_i$ and  $M_j$ are two orientation-preserving homeomorphic summands, then a homeomorphism of $M$ can be constructed by fixing the rest of the summands, leaving $W$ invariant, and interchanging $M'_i$ and $M'_j$. 

Similarly, we can interchange any two $S^2 \times S^1$ summands, leaving $W$ invariant.

\medskip

{\em 3. Slide homeomorphisms.} For $i=1,...,n$, let $\hat{M_i}$ be obtained from $M$ by replacing   $M'_i$ with a  3-ball  $B_i$. Let  $\alpha$ be an arc in $\hat{M_i}$ which meets $B_i$ only in its endpoints and $J_t$ an isotopy of $\hat{M_i}$  that moves  $B_i$ around $\alpha$, with $J_ 0 = id_{\hat{M_i}}$ and $J _1 |_{B_i}  = id_{B_i}$.  The homeomorphism $$s\colon M\longrightarrow M$$ defined by 
\[
s|_{M - M'_i} = J_1 |_{\hat{M_i} - B_i} \text{ and } s|_{M'_i}=  id|_{M'_i}. 
\]
is called {\em slide homeomorphism of $M$ that slides $M_i$ around $\alpha$}.
Starting with a different isotopy $J_t$, then $s$ changes by an isotopy and perhaps by a rotation about the boundary component $S_i$. Therefore each $\alpha$ might determine two isotopy classes of a slide homeomorphism. Note that if $T$ is the frontier of a regular neighborhood of $M'_i\cup \alpha$ in $M$, then $T$ is a compressible torus and $s$ is isotopic to a certain Dehn twist about $T$. 

Similarly, one can slide an $S^2\times S^1$ summand around an arc in $M-(S^2\times S^1)'_j$. 

We remark that if $\alpha_1$ and $\alpha_2$ are two arcs meeting $B_i$ only in their endpoints, and $\alpha$ represents their product, then a slide of $M_i$ around $\alpha$ is isotopic  to a composite of slides around $\alpha_ 1$ and $\alpha_ 2$. Similarly for sliding $S^2\times S^1$.
 
\medskip
 
 With the above description, we have the following classification result of self-homeomorphisms of closed reducible 3-manifolds. This result was first announced in \cite{CR} and an elegant proof was given by McCullough~\cite[pp. 70--71]{Mc}, based on an argument of Scharlemann~\cite[Appendix A]{Bon}. 
 
\begin{theorem} {\normalfont(\cite[p. 69]{Mc}).} \label{Mc1} 
If $M$ is a closed oriented reducible $3$-manifold, then any orientation-preserving homeomorphism $f\colon M\longrightarrow M$ is isotopic to a composition of the following three types of homeomorphisms: 
\begin{itemize}
\item[(1)] homeomorphisms preserving summands;
\item[(2)] interchanges of homeomorphic summands;
\item[(3)] slide homeomorphisms.
\end{itemize}
\end{theorem}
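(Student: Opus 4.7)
The statement is a classification theorem cited from McCullough, so the plan is to reconstruct the Cesar de Sa--Rourke--Scharlemann--McCullough argument. The central object is the system of essential spheres $\Sigma = S_1 \cup \cdots \cup S_{n+m}$ bounding the punctured cell $W$ that realizes the prime decomposition of $M$. The strategy is to modify a given orientation-preserving homeomorphism $f\colon M\to M$, by composing with maps of the three allowed types and by isotopies, until it is reduced to a homeomorphism preserving each summand.

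First, I would invoke a uniqueness result for essential sphere systems in reducible $3$-manifolds (in the form proved by Scharlemann in the appendix to \cite{Bon}, or Laudenbach's version): any two maximal systems of pairwise non-parallel essential $2$-spheres in $M$ are related by a sequence of isotopies and slides across arcs in the complement. Applied to $\Sigma$ and $f(\Sigma)$, this gives an isotopy of $M$ and a finite composition $s$ of slide homeomorphisms such that $s\circ f$ carries $\Sigma$ to $\Sigma$ setwise. The main technical work lives in this step, because one must control the slides that appear and verify they are precisely those admitted by types (2) and (3) in the construction of Section \ref{description}; this is the step I expect to be the chief obstacle, as it rests on an innermost-disk/normal-position argument for sphere systems in a reducible manifold.

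Once $s\circ f$ preserves $\Sigma$ as a set, it induces a permutation $\sigma$ of the boundary spheres $S_1,\dots,S_{n+m}$ of $W$. Because $f$ is a homeomorphism of $M$, the two summands glued on $S_i$ and $S_{\sigma(i)}$ must be orientation-preservingly homeomorphic (in particular, no $M_i$ can be sent to an $S^2\times S^1$ slot). Composing $s\circ f$ with a finite sequence of interchanges of homeomorphic summands, as in type (2), I may assume the resulting homeomorphism $g$ fixes every $S_i$ setwise. Now $g$ restricts to an orientation-preserving self-homeomorphism of the punctured cell $W$ inducing the trivial permutation on $\partial W$; by the remark in Section \ref{description}, $g|_W$ is isotopic to the identity, and this isotopy extends to an ambient isotopy of $M$ supported in a collar of $W$.

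After this final isotopy, $g$ is the identity on $W$ and hence restricts to an orientation-preserving self-homeomorphism of each punctured summand $M_i'$ or $(S^2\times S^1)_j'$, equal to the identity on its boundary sphere. Extending by the identity on $W$, each such piece is a homeomorphism preserving that summand, so $g$ itself is a homeomorphism preserving summands of type (1). Unwinding the modifications, $f$ is isotopic to a composition of homeomorphisms of types (1), (2), and (3), which is the desired conclusion. Throughout, one only uses that $M$ contains no spherical quotients in its prime decomposition, which is the reduction carried out at the beginning of Section \ref{description}.
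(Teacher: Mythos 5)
The paper does not prove Theorem \ref{Mc1}: it is cited verbatim from McCullough's survey, with the remark that the result was announced by C\'esar de S\'a and Rourke and given a proof by McCullough \cite[pp.\ 70--71]{Mc} based on Scharlemann's argument in the appendix to Bonahon's paper. So there is no ``paper's proof'' to compare against; what one can assess is whether your reconstruction is a faithful sketch of the cited argument, and it is. The central object (the sphere system $\Sigma = \partial W$), the reduction of $f(\Sigma)$ to $\Sigma$ by slides and isotopies via the sphere-system uniqueness lemma, the resulting permutation of the $S_i$ killed by interchanges of orientation-preservingly homeomorphic summands, the observation that $W$ is distinguished among complementary pieces, and the final use of the fact that homeomorphisms of the punctured cell $W$ are classified up to isotopy by their boundary permutation --- all of this matches McCullough's scheme and produces the factorization $f \simeq g_3 g_2 g_1$ in the order recorded after the theorem's statement in the paper.

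The one substantive caveat, which you yourself flag, is that the sphere-system uniqueness step (the Scharlemann/Laudenbach innermost-disk argument) is invoked rather than proved; this is where essentially all the work in the genuine proof lives, so your writeup is a correct outline rather than a complete proof. Two small points of precision: the moves produced by that lemma are slides (type 3) only --- interchanges enter only afterwards to kill the induced permutation --- so ``types (2) and (3)'' in your description of the uniqueness step is slightly off; and one should remark explicitly that $s\circ f$ must carry $W$ to itself because $W$ is the unique complementary piece that is a punctured $3$-sphere (the other pieces are punctured aspherical manifolds or punctured $S^2\times S^1$'s, none simply connected), which is what makes the permutation of the $S_i$ well defined.
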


In fact, the proof of Theorem \ref{Mc1} presented in \cite[pp. 70-71]{Mc}, together with our adaptions on the construction of $M$, implies that  
\begin{equation}
f =g_3 g_2 g_1,
\end{equation}
where $g_3$ is a finite composition of homeomorphisms of type 3 (slide homeomorphisms) and 
isotopies of $M$, and $g_1$, $g_2$ are compositions of finitely many homeomorphisms of type 1 and 2 respectively. 

\section{Commutativity in homotopy}\label{s:commutativity}

Next, we show that there are self-homeomorphisms of an aspherical summand of a reducible $3$-manifold whose ``conjugation" by the pinch map in homotopy gives the three types of homeomorphisms described in the previous section.

\medskip

According to the proof of Lemma \ref{l:asphericalsum}, we may assume, after possibly passing to a finite cover, that $M$ contains an aspherical summand $M_1$ in its prime decomposition so that the self-homeomorphism $f\colon M\longrightarrow M$ does not contain a component of $g_2$ that interchanges $M_1$ with another summand.

Clearly $M_1$ can be considered as being obtained by replacing each $M'_i$ ($i\geq2$) and $(S^2\times S^1)'_j$ with a $3$-ball $B_i$ and $B_j$ respectively. Then we can construct a pinch map
\[
p\colon M_1\#\cdots\# M_n\#(\#_m S^2\times S^1)\longrightarrow M_1,
\]
by mapping each $M'_i$ ($i\geq 2$) to $B_i$, each $(S^2\times S^1)'_j$ to $B_j$ and the rest of the part identically to itself.

We will show the following whose line of proof follows that of~\cite[Lemmas 3.6 and 3.7]{Ni} adapted to our situation:

\begin{lemma}\label{l:commutativity}
For each component $g_i$ of $f$, there is a self-homeomorphism $h_i$ of the aspherical summand  $M_1$ 
such that the following diagram commutes in homotopy, i.e. $(p\circ g_i)_*=(h_i\circ p)_*$.
$$
\xymatrix{
\pi_1(M)\ar[d]_{p_*} \ar[r]^{{g_i}_*}&  \ar[d]^{p_*}  \pi_1(M)\\
\pi_1(M_1) \ar[r]^{{h_i}_*}& \pi_1(M_1) \\
}
$$
\end{lemma}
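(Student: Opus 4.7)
The approach is to analyze each of the three types of homeomorphism from Theorem \ref{Mc1} separately at the level of fundamental groups. Throughout I would use the free product decomposition
\[
\pi_1(M) = \pi_1(M_1) * \pi_1(M_2) * \cdots * \pi_1(M_n) * F_m,
\]
where $F_m$ is generated by the $S^2\times S^1$-summands, and the pinch map induces $p_*$ as the projection onto the $\pi_1(M_1)$-factor that kills every other factor. The asphericity of $M_1$ means that it suffices to exhibit, in each case, a self-homeomorphism $h_i$ of $M_1$ that realizes $p_* \circ (g_i)_*$ at the level of $\pi_1$.

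For a type 1 component $g_i$, I would take $h_i$ to be the restriction $g_i|_{M'_1}$ extended by the identity on the $3$-ball $D^3_1 = M_1 \setminus \mathrm{int}(M'_1)$; since $g_i$ is the identity on $W$ and hence on $\partial M'_1 = S_1$, this extension is a well-defined self-homeomorphism of $M_1$. Commutativity then holds factor by factor: on $\pi_1(M_1)$ both sides act as $(g_i|_{M_1})_*$, while every other free factor is preserved by $(g_i)_*$ (including the $\Z$-factors under the spins of $S^2\times S^1$) and becomes trivial after $p_*$. For the remaining two types I expect to take $h_i = \mathrm{id}_{M_1}$, reducing the commutativity to the identity $p_* \circ (g_i)_* = p_*$. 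For type 2 interchanges, the preliminary reduction at the start of Section \ref{s:commutativity} ensures that no component of $g_2$ interchanges $M_1$ with another summand, so $g_i$ fixes $\pi_1(M_1)$ pointwise and merely permutes the other free factors, all of which lie in $\ker p_*$.

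For type 3 slides I would use the standard partial conjugation description, mirroring the computation in \cite{Ni}: a slide of a summand $M_k$ (respectively, of an $S^2\times S^1$-summand) around an arc $\alpha$ induces on $\pi_1(M)$ the automorphism $x \mapsto \omega^{-1} x \omega$ on the $\pi_1(M_k)$-factor (respectively, on the corresponding generator of $F_m$), and the identity on every other factor, where $\omega \in \pi_1(\hat{M_k})$ is represented by $\alpha$. When $k \geq 2$ the conjugated factor already lies in $\ker p_*$, so $p_*(\omega^{-1} x \omega) = 1 = p_*(x)$; when $k = 1$, the element $\omega$ lives in $\pi_1(\hat{M_1}) = \pi_1(M_2) * \cdots * F_m \subset \ker p_*$, hence $p_*(\omega) = 1$ and $p_*(\omega^{-1} x \omega) = p_*(x)$. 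Either way, $p_* \circ (g_i)_* = p_*$ on the nose. The main obstacle I anticipate is justifying the partial conjugation formula rigorously---tracking basepoints during the slide isotopy and identifying $\omega$ as a specific element of the correct free factor of $\pi_1(M)$---after which all three cases collapse to direct checks on generators.
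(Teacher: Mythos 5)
Your proof is correct and, for types 1 and 2, coincides with the paper's argument essentially verbatim (including the observation that $W$ together with the filled-in balls is simply connected, so everything outside $M'_1$ dies under $p_*$). The genuine difference is in the slide case. The paper does not invoke the partial-conjugation formula for $(g_3)_*$: instead, it constructs $h_3$ explicitly as a Dehn twist $s_\beta$ along a torus $T'\subset M_1$ which is the image of the sliding torus $T$ under the pinch (where $\beta$ is the ``shadow'' of $\alpha$ inside $W$ together with the balls), and then compares $p\circ g_3$ with $h_3\circ p$ directly on loops, homotoping them off $T$ and $T'$ and using simple connectivity of $W\cup(\bigcup B_i)\cup(\bigcup B_{n+j})$. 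In that argument $h_3$ ends up inducing the identity on $\pi_1(M_1)$, so the paper's $h_3$ is homotopic to $\mathrm{id}_{M_1}$, matching your choice. Your algebraic route via $x\mapsto\omega^{-1}x\omega$ is cleaner once the formula is granted, but note that it is exactly the step you yourself flag: with the geometric definition of a slide (which is the identity on $M'_k$), whether the conjugated piece is $\pi_1(M_k)$ or its complementary free factor depends on the basepoint convention, and the two answers differ by an inner automorphism of $\pi_1(M)$. This does not affect the conclusion $p_*\circ(g_3)_*=p_*$ up to inner automorphisms of $\pi_1(M_1)$ (which are realized by homeomorphisms of $M_1$, so the lemma still holds), but it is precisely the bookkeeping the paper's geometric argument is designed to avoid. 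Either route works; the paper's buys you a more self-contained proof at the price of some extra topology with tori and regular neighborhoods, while yours buys brevity at the price of importing and carefully normalizing the Fouxe--Rabinovitch-type description of slide automorphisms.
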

\begin{proof}
We will examine each of the (components of) $g_i$ separately.

\subsubsection*{Homeomorphisms preserving summands}
Suppose first, that $g_1$ is a self-homeomorphism of summands $M_i$ or of $S^2 \times S^1$. Define
\[
h_1\colon M_1\longrightarrow M_1
\]
by
\[
h_1|_{M'_1}={g_1}|_{M'_1} \text{ and } 
h_1|_{M_1-M'_1}={id}|_{W\cup(\cup_{i=2}^nB_i)\cup(\cup_{j=1}^{m} B_{n+j})}. 
\]
If $\gamma$ is a loop in $M'_1$, then ${g_1}(\gamma)$ is a loop in $M'_1$ as well, and so
\begin{equation}\label{g1}
\begin{gathered}
p\circ g_1(\gamma)={g_1}(\gamma)=g_1\circ p(\gamma)=h_1\circ p(\gamma).
\end{gathered}
\end{equation}
If $\gamma\notin M'_1$,  then $g_1(\gamma)\notin M'_1$, and since $W\cup(\cup_{i=2}^nB_i)\cup(\cup_{j=1}^{m} B_{n+j})$ is simply connected, we deduce that $g_1(\gamma)$ and $p(\gamma)$ are homotopically trivial. Thus again $p_* \circ {g_1}_*={h_1}_*\circ p_*$.

\subsubsection*{Interchanges of homeomorphic summands}
Now, let ${g_2}$ be a homeomorphism that interchanges two aspherical summands $M_i$ and $M_j$ or two copies of $S^2\times S^1$. By our assumption on $M_1$, we know that $i,j\neq 1$. Set
\[
h_2\colon M_1\longrightarrow M_1, \ h_2:=id
\]
If $\gamma$ is a loop in $M'_1$, then ${g_2}(\gamma)=\gamma$, and so
\begin{equation}\label{g1}
\begin{gathered}
p\circ {g_2}(\gamma)=p(\gamma)=\gamma=h_2(\gamma)=h_2\circ p(\gamma).
\end{gathered}
\end{equation}
If $\gamma\notin M'_1$,  then ${g_2}(\gamma)\notin M'_1$, and so, as in the previous case, $p_* \circ {g_2}_*={h_2}_*\circ p_*$, because $W\cup(\cup_{i=2}^nB_i)\cup(\cup_{j=1}^{m} B_{n+j})$ is simply connected.

\subsubsection*{Slide homeomorphisms}
Finally, let ${g_3}$ be a slide homeomorphism. 

Suppose first that $g_3$ slides $M_1$ around an arc $\alpha$ in $M-M'_1$ such that $\alpha\cap (M'_i)$ and $\alpha\cap(S^2\times S^1)'_j$ is a single arc for any $i\geq 2$ and any $j$. By McCullough's description, there is a Dehn twist $s_\alpha$ about the frontier $T$ of a regular neighbourhood of $M'_1\cup\alpha$ which is isotopic to $g_3$. An arc $\beta$ in  $W\cup(\cup_{i=2}^nB_i)\cup(\cup_{j=1}^{m} B_{n+j})$ is given by letting $\beta|_W$ be the same as $\alpha|_W$ and $\beta|_{(\cup_{i=2}^nB_i)\cup(\cup_{j=1}^{m} B_{n+j})}$ be the trivial arc. Then we can define a Dehn twist $s_\beta$ about the frontier $T'$ of a regular neighborhood of $M'_1\cup\beta$ (corresponding to $g_3$). This defines our new homeomorphism $h_3\colon M_1\longrightarrow M_1$. 
If $\gamma$ is a loop in $M'_1$, then clearly
\begin{equation}\label{g1}
\begin{gathered}
p\circ {g_3}(\gamma)=p(\gamma)=\gamma=h_3(\gamma)=h_3\circ p(\gamma).
\end{gathered}
\end{equation}
If $\gamma\notin M'_1$, then, after homotoping $\gamma$ if necessary, we can assume that $\gamma\cap T=\emptyset$ and $p(\gamma)\cap T'=\emptyset$. We then have $g_3(\gamma)=s_\alpha(\gamma)=\gamma$ and so $p\circ g_3(\gamma)=p(\gamma)$ is homotopically trivial, because $W\cup(\cup_{i=2}^nB_i)\cup(\cup_{j=1}^{m} B_{n+j})$ is simply connected. Thus $p_* \circ {g_3}_*={h_3}_*\circ p_*$ as required.

Next, assume that $g_3$ slides some $M_i$, $i\neq 1$, around an arc in $M-M'_i$ (similarly for sliding a copy of $S^2\times S^1$). We can assume that $\alpha\cap M'_1$ is not trivial, otherwise the proof is identical to the above argument. Now, we have an arc $\beta$ which is given by $\alpha$ in $M'_1\cup W$ and it is trivial in $(\cup_{i=2}^nB_i)\cup(\cup_{j=1}^{m} B_{n+j})$, and Dehn twists $s_\alpha$ and $s_\beta$ about tori $T$ and $T'$, given similarly as above. In this way, we define our homeomorphism $h_3\colon M_1\longrightarrow M_1$. For loops not in $M'_1$ the situation is as before, because $p(\gamma)$ is homotopically trivial. For a loop $\gamma$ in $M'_1$, we can assume again that, after homotoping $\gamma$, we have  $\gamma\cap T=\emptyset$ and $\gamma\cap T'=\emptyset$. Then
\begin{equation}\label{g1}
\begin{gathered}
p\circ {g_3}(\gamma)=p\circ s_\alpha(\gamma)=p(\gamma)=\gamma=s_\beta(\gamma)=h_3\circ p(\gamma).
\end{gathered}
\end{equation}
This finishes the proof of the lemma.
\end{proof}

\section{Finishing the proof of Theorem \ref{t:main}}\label{proof}

Now we will construct a map of degree one from $M\rtimes_f S^1$ to $M_1\rtimes_h S^1$, for some homeomorphism $h\colon M_1\longrightarrow M_1$.

\medskip

As above, we can assume by Lemmas \ref{l:noaspherical} and \ref{l:asphericalsum} that $M=M_1\#\cdots\# M_n\#(\#_m S^2\times S^1)$, where $M_i$ are aspherical and $n\geq 1$. Moreover, we assume that $M_1$ is not interchanged under $f$ with another summand $M_i$ (by the proof of Lemma \ref{l:asphericalsum}).

Consider the classifying space $B\pi_1(M)=M_1\vee\cdots\vee M_n\vee(\vee_m S^1)$ and the homotopically unique map
\[
B(f_*)\colon M_1\vee\cdots\vee M_n\vee(\vee_m S^1)\longrightarrow M_1\vee\cdots\vee M_n\vee(\vee_m S^1),
\]
where $f_*\colon\pi_1(M)\longrightarrow\pi_1(M)$ is the isomorphism induced by $f$.
Let also the map
\[
B(p_*)\colon M_1\vee\cdots\vee M_n\vee(\vee_m S^1)\longrightarrow M_1,
\]
induced by the pinch map $p$. (Again, $p_*\colon \pi_1(M_1)\ast\cdots\ast\pi_1(M_n)\ast F_m\longrightarrow \pi_1(M_1)$ denotes the induced homomorphism.)

By Theorem \ref{Mc1} (and the comments after that), we know that $f=g_3g_2g_1$, where $g_3$ is a finite composition of homeomorphisms of type 3 and isotopies of $M$, and 
each of $g_1$, $g_2$ is a composition of finitely many homeomorphisms of type 1 and 2 respectively, as given in Section \ref{description}. 

By Lemma \ref{l:commutativity}, there is a homeomorphism $h\colon M_1\longrightarrow M_1$ such that 
\begin{equation}\label{commutativitygroup}
\begin{gathered}
p_*\circ f_*=h_*\circ p_*.
\end{gathered}
\end{equation}
For set $h=h_3h_2h_1$, where each (component of) $h_i$ is given by Lemma \ref{l:commutativity}. Then applying successively Lemma \ref{l:commutativity} on each $h_i$ we deduce that (\ref{commutativitygroup}) indeed holds.
Therefore, there is a well-defined surjective homomorphism
\[
\overline{p}_*\colon\pi_1(M\rtimes_f S^1)\longrightarrow\pi_1(M_1\rtimes_h S^1)
\]
which maps each element $x$ of $\pi(M)$ to $p_*(x)\in\pi_1(M_1)$ and the generator of the infinite cyclic group acting (through $f_*$) on $\pi_1(M)$ to the generator of the infinite cyclic group acting (through $h_*$) on $\pi_1(M_1)$. 

\begin{remark}\label{Nicommute}
If we replace $M_1$ by the connected sum $\#_m S^2\times S^1$ and adapt accordingly Lemma \ref{l:commutativity} to the situation of~\cite[Lemmas 3.6 and 3.7]{Ni}, then we will obtain a surjection 
\begin{equation}\label{Nisurjection}
\pi_1(M\rtimes_f S^1)\longrightarrow F_m\rtimes_{h_*}\Z.
\end{equation}
This surjection does not yield a map of non-zero degree, because the classifying space of $F_m$ is one dimensional.
\end{remark}

The homomorphism $\overline{p}_*$ gives rise to a well-defined map
\[
B(\overline{p}_*)\colon B\pi_1(M\rtimes_f S^1)\longrightarrow B\pi_1(M_1\rtimes_h S^1).
\]
Since $M_1$ is aspherical, the homotopy long exact sequence for $\pi_1(M_1\rtimes_h S^1)$ implies that $M_1\rtimes_h S^1$ is aspherical. Furthermore, 
\[
\pi_1(M\rtimes_f S^1)=\langle \pi_1(M), t \ | \ txt^{-1}=f_*(x), \ x\in\pi_1(M) \rangle=\pi_1(B\pi_1(M)\rtimes_{B(f_*)} S^1).
\]
Again, by the asphericity of $B\pi_1(M)$, we deduce that $B\pi_1(M)\rtimes_{B(f_*)} S^1$ is aspherical. Thus $B(\overline{p}_*)$ is (homotopic to) a map 
\begin{align*}
\begin{split}
B\pi_1(M)\rtimes_{B(f_*)} S^1 & \longrightarrow  M_1\rtimes_hS^1 \\
[(a,t)] & \ \mapsto \ [(B(p_*)(a),t)],
\end{split}
\end{align*}
which we still denote by $B(\overline{p}_*)$.
  
Define now a map
 \[
 F\colon M\rtimes_f S^1\longrightarrow M_1\rtimes_h S^1
 \]
by 
\[
F:=B(\overline{p}_*)\circ\psi_{M\rtimes_f S^1},
\]
where $\psi_{M\rtimes_f S^1}\colon M\rtimes_f S^1\longrightarrow B\pi_1(M)\rtimes_{B(f_*)} S^1$ is the classifying map for $M\rtimes_f S^1$ (recall that $\pi_1(M\rtimes_f S^1)=\pi_1(B\pi_1(M)\rtimes_{B(f_*)} S^1)$). Let also $\psi_M\colon M\longrightarrow B\pi_1(M)$ denote the classifying map for $M$. Then the following diagram
$$
\xymatrix{
0=H_4(M)\ar[d]^{0} \ar[r]^{0} &  H_4(M\rtimes_f S^1)\ar[d]^{H_4(\psi_{M\rtimes_f S^1})} \ar[r]^{\ \ \ \ \alpha_1} &  H_3(M) \ar[d]^{H_3(\psi_M)} \ar[r]^{0} &  H_3(M)\ar[d]^{H_3(\psi_M)}\ar[r]^{} & \cdots \\
0=H_4(B\pi_1(M))\ar[d]^{0} \ar[r]^{0 \ \ \ \ } &  H_4(B\pi_1(M)\rtimes_{B(f_*)} S^1)\ar[d]^{H_4(B(\overline{p}_*))} \ar[r]^{\ \ \ \ \ \ \alpha_2} &  H_3(B\pi_1(M)) \ar[d]^{H_3(B(p_*))} \ar[r]^{} &  H_3(B\pi_1(M))\ar[d]^{H_3(B(p_*))}\ar[r]^{} & \cdots  \\
0=H_4(M_1) \ar[r]^{0}& H_4(M_1\rtimes_h S^1)\ar[r]^{\ \ \ \ \alpha_3} &  H_3(M_1) \ar[r]^{} \ar[r]^{0} &  H_3(M_1)\ar[r]^{} & \cdots \\
}
$$
implies
\begin{equation*}\label{eq}
\begin{aligned}
\alpha_3\circ H_4(B(\overline{p}_*))\circ H_4(\psi_{M\rtimes_f S^1})([M\rtimes_f S^1])& = H_3(B(p_*))\circ\alpha_2\circ H_4(\psi_{M\rtimes_f S^1})([M\rtimes_f S^1])\\
                                         & =  H_3(B(p_*))\circ H_3(\psi_M)\circ\alpha_1([M\rtimes_f S^1])\\
                                         & =  H_3(B(p_*))\circ H_3(\psi_M)([M])\\
                                         & =  H_3(B(p_*))([M_1],...,[M_n])=[M_1].\\
\end{aligned}
 \end{equation*}
 This means that
 \[
 H_4(F)([M\rtimes_f S^1])=[M_1\rtimes_h S^1],
 \]
 completing the proof of Theorem \ref{t:main}.
 
\section{Virtual first Betti numbers}\label{virtual}

In this section we discuss Conjecture \ref{conjBetti}. 

\medskip

Recall that the finiteness of virtual first Betti numbers of mapping tori of prime $3$-manifolds follows that of their fiber $M$, namely $vb_1(M)$ (and $vb_1(M\rtimes_f S^1)$) is finite if and only if $M$ is virtually $S^3$, $S^2\times S^1$ or a $T^2$-bundle. More precisely, if a $3$-manifold $M$ is finitely covered by $S^3$, $S^2\times S^1$ or a $T^2$-bundle, then $vb_1(M)\leq 3$, and the corresponding mapping tori of $M$ satisfy $vb_1(M\rtimes_f S^1)\leq vb_1(M)+1\leq 4$ for any homeomorphism $f\colon M\longrightarrow M$. It is therefore natural to examine Conjecture \ref{conjBetti} according to whether a reducible $3$-manifold contains a prime summand with virtually infinite first Betti number or not.

\subsection{At least one prime summand with virtually infinite first Betti number}
Suppose first that a reducible $3$-manifold $M$ contains a summand in its prime decomposition with $vb_1=\infty$. This summand is necessarily aspherical. If $f\colon M\longrightarrow M$ is an orientation preserving homeomorphism, then Theorem \ref{Mc1} tells us that $f$ is isotopic to a composition $g_3g_2g_1$ where each $g_i$ is a finite composition of homeomorphisms of type $i=1,2,3$. By Lemmas \ref{l:noaspherical}, \ref{l:asphericalsum} and \ref{l:cover} (and their proofs), there is a finite a cover $\overline{M}$ of $M$ containing an aspherical summand $M_1$ in its prime decomposition which is not virtually a mapping torus of $T^2$ and is not interchanged by some $f^k$ (under a component of type 2) with any other summand of $\overline{M}$. Then Theorem \ref{t:main} implies that there is a self-homeomorphism $h$ of $M_1$ and a degree one map
\[
\overline{M}\rtimes_{f^k} S^1 \longrightarrow M_1\rtimes_h S^1.
\]
In particular, $ b_1(\overline{M}\rtimes_{f^k} S^1)\geq b_1(M_1\rtimes_h S^1)$, and so Theorem \ref{LiNi} implies that 
$$vb_1(M\rtimes_f S^1)=\infty.$$

This proves Conjecture \ref{conjBetti} in all cases, except when $M$ is virtually $(\#_n T^2\rtimes S^1)\#(\#_m S^2\times S^1)$ (containing at least two summands).

\subsection{Only summands with virtually finite first Betti numbers}

Suppose, finally, that $M$ is (virtually) of the form $(\#_n T^2\rtimes S^1)\#(\#_m S^2\times S^1)$. In that case, Theorem \ref{t:main} is not anymore applicable to deduce that $vb_1(M\rtimes_f S^1)=\infty$. On the one hand, if $n=0$, then $\#_m S^2\times S^1$ does not contain any aspherical summands. On the other hand, if $n\neq0$, then Theorem \ref{t:main} implies that there is a degree one map $\overline{M}\rtimes_{f^k} S^1\longrightarrow (T^2\rtimes S^1)\rtimes_h S^1$, which, however, does not suffice to conclude that $vb_1(M\rtimes_f S^1)=\infty$ because $vb_1((T^2\rtimes S^1)\rtimes_h S^1)\leq 4$.
It would be interesting to find topological arguments that cover those two cases as well. 

Nevertheless, we can appeal to group theoretic results to deduce that $vb_1=\infty$ in the remaining two cases. First, one can deduce from known results that $\pi_1((\#_m S^2\times S^1)\rtimes_f S^1)$ is large for $m\geq 2$: By~\cite{BF,Brink}, the free-by-cyclic group 
\[
F_m\rtimes_{f_*}\Z=\pi_1((\#_m S^1\times S^1)\rtimes_{f} S^1), \ m\geq 2,
\]
is word hyperbolic if and only if it does not contain as subgroup an isomorphic copy of $\Z^2$. In the case where $F_m\rtimes_{f_*}\Z$ is hyperbolic, then it is large by~\cite{Ag,HW,Wi}. If now $\Z^2\subset F_m\rtimes_{f_*}\Z$, then $F_m\rtimes_{f_*}\Z$ is large by~\cite{But}. Thus in all cases we deduce that $vb_1(F_m\rtimes_{f_*}\Z)=\infty$ as required.

Therefore, using the largeness of $F_m\rtimes_{f_*}\Z$, we conclude that $$vb_1((\#_m S^2\times S^1)\rtimes_f S^1)=\infty \text{ for } m\geq2.$$ 

\begin{remark}
By~\cite{BF,Brink}, $F_m\rtimes_{f_*}\Z$ being word hyperbolic is equivalent to the automorphism $f_*$ being atoroidal, i.e. having no non-trivial periodic conjugacy classes. When $f_*$ is toroidal (i.e. it has some non-trivial periodic conjugacy class), Ni also showed that $vb_1(F_m\rtimes_{f_*}\Z)=\infty$; see~\cite[Lemma 2.4]{Ni}.
\end{remark}

Finally, as Ni shows in all cases where aspherical summands exist, our remaining case of mapping tori of $(\#_n T^2\rtimes S^1)\#(\#_m S^2\times S^1)$, where $n\geq 1$, can be treated as follows: Recall that we can always assume that $m\geq 2$ and using the $\pi_1$-surjection induced by the pinch map
\[
\pi_1(((\#_n T^2\rtimes S^1)\#(\#_m S^2\times S^1))\rtimes_f S^1)\longrightarrow \pi_1((\#_m S^2\times S^1)\rtimes_h S^1),
\]
we deduce that $$vb_1(((\#_n T^2\rtimes S^1)\#(\#_m S^2\times S^1))\rtimes_f S^1)=\infty$$ as required; see also Remark \ref{Nicommute}.


\begin{thebibliography}{123}

\bibitem{Ag}
I. Agol, {\em The virtual Haken conjecture}, with an appendix by I. Agol, D. Groves, and J. Manning, Doc. Math., {\bf 18} (2013), 1045--1087.

\bibitem{BF}
M. Bestvina and M. Feighn, {\em A combination theorem for negatively curved groups}, J. Differential Geom., {\bf 35} (1992), 85--101.

\bibitem{Brink}
P. Brinkmann, {\em Hyperbolic automorphisms of free groups}, Geom. Funct. Anal., {\bf 10} (2000), 1071--1089.

\bibitem{Bon}
F. Bonahon, {\em Cobordism of automorphisms of surfaces}, Ann. Sci. \'Ecole Norm. Sup. {\bf 16} (1983), 237--270.

\bibitem{But}
J. O. Button, {\em Large groups of deficiency 1}, Isr. J. Math. {\bf 167} (2008), 111--140.

\bibitem{CR}
E. C\'esar de S\'a and C. Rourke, {\em The homotopy type of homeomorphisms of 3-manifolds}, Bull. Amer. Math. Soc. (N.S.) {\bf 1}, no. 1 (1979), 251--254. 

\bibitem{HW}
M. Hagen and D. Wise, {\em Cubulating hyperbolic free-by-cyclic groups: The general case}, Geom. Funct. Anal., {\bf 25} (2015), 134--179.

\bibitem{He}
J. Hempel, {\em 3-manifolds}, Princeton University Press And University of Tokyo Press, 1976.

\bibitem{KN}
D. Kotschick and C. Neofytidis, {\em On three-manifolds dominated by circle bundles}, Math. Z. {\bf 274} (2013), 21--32.

\bibitem{LiNi}
T.-J. Li and Y. Ni, {\em Virtual Betti numbers and virtual symplecticity of 4-dimensional mapping tori} Math. Z., {\bf 277} (2014), 195--208.

\bibitem{Mc} 
D. McCullough, {\em Mappings of reducible 3-manifolds}, Proceedings of the Semester in Geometric and Algebraic Topology, 
Warsaw, Banach Center (1986), 61--76.

\bibitem{NW}
C. Neofytidis and S. Wang, {\em Invariant incompressible surfaces in reducible 3-manifolds}, Ergodic Theory Dynam. Systems {\bf 39} (2019), 3136--3143.

\bibitem{Ni}
Y. Ni, {\em Virtual Betti numbers and virtual symplecticity of 4-dimensional mapping tori, II}, Sci. China Math. {\bf 60} no. 9 (2017), 1591--1598.

\bibitem{Z} X. Zhao, {\it On the Nielsen numbers of slide homeomorphisms on 3-manifolds.} Topology Appl. {\bf 136}, no. 1-3 (2004), 169--188. 

\bibitem{Wi}
D. Wise, {\sl From Riches to Raags: 3-Manifolds, Right-Angled Artin Groups, and Cubical Geometry}, CBMS Regional Conference Series in Mathematics {\bf 117}, American Mathematical Society, Providence, RI, 2012.

\end{thebibliography}
\end{document}